\documentclass[12pt]{article}
\usepackage{epsf}
\usepackage{amsbsy,amsmath}
\usepackage{amsfonts}
\usepackage{amssymb}
\usepackage{eucal}
\usepackage{graphics,mathrsfs}
\usepackage{amsthm}
\usepackage{secdot}
\oddsidemargin .15in \evensidemargin .25in \textwidth 155mm
\topmargin .01in \textheight 218mm \linespread{1.08}
\parskip = 0.8pt

\newtheorem{thm}{\bf Theorem}[section]
\newtheorem{cor}{\bf Corollary}[section]
\newtheorem{lem}{\bf Lemma}[section]

\numberwithin{equation}{section}

\newsavebox{\savepar}

\pagestyle{myheadings}
\begin{document}
\title{\bf Composition operators on Orlicz-Sobolev spaces}
\author{Ratan Kumar Giri, Debajyoti Choudhuri \footnote{Corresponding author} \\
 {\it{\small Dept. of Mathematics, National Institute of Technology, Rourkela, Odisha, India }}\\
{\it{\small e-mails: giri90ratan@gmail.com,
choudhurid@nitrkl.ac.in}}
}
\date{\small\today}

 \maketitle
\begin{abstract}
The kernel of composition operator $C_T$ on Orlicz-Sobolev space is
obtained. Using the kernel, a necessary and  a sufficient condition
for injectivity of composition operator $C_T$ has been established.
Composition operators on Orlicz-Sobolev space with finite ascent as
well as infinite ascent have been characterized.
\\
\newline
\textbf{Keywords:} Orlicz function, Orlicz space, Sobolev space, Orlicz-Sobolev space, Radon-Nikodym derivative, Composition operators, Ascent.\\
\newline
\textbf{2010 AMS Mathematics Subject Classification:} Primary 47B33;
Secondary 46E30; 46E35.
\end{abstract}

\section{Introduction and Preliminaries}
\par Let $\Omega$ be an open subset of the Euclidean space
$\mathbb{R}^n$ and $(\Omega,\Sigma,\mu)$ be a $\sigma$-finite
complete measure space, where $\Sigma$ is a $\sigma$-algebra of
subsets of $\Omega$ and $\mu$ is a non-negative measure on $\Sigma$.
Let $\varphi: \mathbb{R} \rightarrow \mathbb{R}$ be an N-function
\cite{Ames80, Ames94}, i.e., an even, convex and continuous function
satisfying $\varphi(x)=0$ if and only if $x=0$ with
 $\displaystyle {\lim_{x\rightarrow 0}\frac{\varphi(x)}{x}=0}$ and
$\displaystyle {\lim_{x\rightarrow
\infty}\frac{\varphi(x)}{x}=\infty}.$ Such a function $\varphi$ is
known as Orlicz function. Let $L^{0}(\Omega)$ denote the linear
space of all equivalence classes of $\Sigma$-measurable functions on
$\Omega$, where we identify any two functions are equal if they
agree $\mu$-almost everywhere on $\Omega$. The Orlicz space
$L^\varphi (\Omega)$ is defined as the set of all functions $f\in
L^{0}(\Omega)$ such that $\displaystyle{\int_{\Omega}\varphi(\alpha
|f|)d\mu} < \infty$ for some $\alpha>0$. The space $L^\varphi
(\Omega)$ is a Banach space with respect to the Luxemburg norm
defined by
$$ ||f||_\varphi=\inf \left \{k>0:\int_\Omega\varphi\left(\frac{|f|}{k}\right)\leq 1\right\}.$$
If $\varphi(x)= x^p$, $1\leq p <\infty$, then $L^{\varphi}(\Omega)=
L^p(\Omega)$, the well known Banach space of $p$-integrable function
on $\Omega$ with $||f||_\varphi=
\left(\frac{1}{p}\right)^{\frac{1}{p}} ||f||_p$. An Orlicz function
$\varphi$ is said to satisfy the $\Delta_2$-condition, if there
exists constants $k>0$, $u_\circ\geq0$ such that $\varphi(2u)\leq k
\varphi(u)$ for all $u\geq u_\circ$. If $ \tilde{L}^\varphi(\Omega)$
denotes the set of all function $f\in L^{0}(\Omega)$ such that
$\displaystyle{\int_{\Omega}\varphi(|f|)d\mu} < \infty$, then one
has $L^\varphi (\Omega)= \tilde{L}^\varphi(\Omega)$, when the Orlicz
function $\varphi$ satisfies the $\Delta_2$ condition. Here, the set
$ \tilde{L}^\varphi(\Omega)$ is known as Orlicz class. We define the
closure of all bounded measurable functions in $L^\varphi (\Omega)$
by $E^\varphi(\Omega)$. Then $E^\varphi(\Omega)\subset L^\varphi
(\Omega)$ and $E^\varphi(\Omega)= L^\varphi (\Omega)$ if and only if
$\varphi$ satisfies $\Delta_2$ condition. For further literature
concerning Orlicz spaces, we refer to Kufener, John and Fucik
\cite{Ames79}, Krasnoselskii $\&$ Rutickii\cite{Ames80} and Rao
\cite{Ames94}.
\par The Orlicz-Sobolev space $W^{1,\varphi}(\Omega)$ is defined as
the set of all functions $f$ in Orlicz space $L^\varphi (\Omega)$
whose weak partial derivative $\frac{\partial f}{\partial x_i}$ ( in
the distribution sense ) belong to $L^\varphi (\Omega)$, for all
$i=1,2,\cdots,n$. It is a Banach space with respect to the norm:
$$||f||_{1,\varphi}= ||f||_\varphi + \displaystyle{\sum_{i=1}
^n\big|\big|\frac{\partial f}{\partial x_i}\big|\big|_\varphi}$$ If
we take an Orlicz function $\varphi(x)=|x|^p$, $1\leq p<\infty$,
then $W^{1,\varphi}(\Omega)= W^{1,p}(\Omega)$ with
$||f||_{1,\varphi}= \left(\frac{1}{p}\right)^{1/p}||f||_{1,p}$,
i.e., the corresponding Orlicz-Sobolev space $
W^{1,\varphi}(\Omega)$ becomes classical Sobolev space of order one.
In fact, these spaces are more general than the usual Lebesgue or
Sobolev spaces. For more details on Sobolev and Orlicz-Sobolev
spaces, we refer to Adam \cite{sob1}, Rao \cite{Ames94} and Arora,
Datt and Verma \cite{sob}.
\par Let $T:\Omega\rightarrow\Omega$ be a measurable transformation, that
is, $T^{-1}(A)\in\Sigma$ for any $A\in\Sigma$. If $\mu(T^{-1}(A))=0$
for any $A\in\Sigma$ with $\mu(A)=0$, then $T$ is called as
nonsingular measurable transformation. This condition implies that
the measure $\mu\circ T^{-1}$, defined by $\mu\circ T^{-1}(A)
=\mu(T^{-1}(A))$ for $A\in\Sigma$, is absolutely continuous with
respect to $\mu$ i.e., $\mu\circ T^{-1}\ll\mu$. Then the
Radon-Nikodym theorem implies that there exist a non-negative
locally integrable function $f_T$ on $\Omega$ such that
$$\mu\circ T^{-1}(A)=\int_{A}f_T(x)d\mu(x)\,\,\,\quad\mbox{for}
\,\, A\in\Sigma.$$ It is known that any nonsingular measurable
transformation $T$ induces a linear operator (Composition operator)
$C_T$ from $L^{0}(\Omega)$ into itself which is defined as
$$C_Tf(x)= f(T(x)),\quad x\in\Omega,\quad f\in L^{0}(\Omega).$$
Here, the non-singularity of $T$ guarantees that the operator $C_T$
is well defined. Now, if the linear operator $C_T$ maps from
Orlicz-Sobolev space $W^{1,\varphi}(\Omega)$ into itself and is
bounded, then we call $C_T$ is a composition operator in
$W^{1,\varphi}(\Omega)$ induced by $T$. A major application of
Orlicz-Sobolev spaces can be found in PDEs \cite{orlsv}.
\par The composition operators received considerable attention over
the past several decades especially on some measurable function
spaces such as $L^p$-spaces, Bergman spaces and Orlicz spaces, such
that these operators played an important role in the study of
operators on Hilbert spaces. The basic properties of composition
operators on measurable function spaces have been studied by many
mathematicians. For a flavor of composition operators on different
spaces we refer to \cite{Ames102}, \cite{Ames81}, \cite{blum74} and
\cite{Ames101} and the references therein. The boundedness and
compactness of the composition operator $C_T$ on Orlicz-Sobolev
space $W^{1,\varphi}(\Omega)$ has been characterized in the paper
due to Arora, Datt and Verma \cite{sob}. Regarding the boundedness
of the composition operator $C_T$ on Orlicz-Sobolev space
$W^{1,\varphi}(\Omega)$ into itself, we have the following two
important results \cite{sob}.
\begin{lem}
Let $f_T,\,\, \frac{\partial T_k}{\partial x_i}\in L^\infty(\Omega)$
with $||\frac{\partial T_k}{\partial x_i}||_\infty \leq M$, for some
$M>0$ and for all $i,k=1,2,\cdots,n$, where $T= (T_1,
T_1,\cdots,T_n)$ and $\frac{\partial T_k}{\partial x_i}$ denotes the
partial derivative (in the classical sense). Then for each $f$ in
$W^{1,\varphi}(\Omega)$, we have $C_T(f)\in W^{1,\varphi}(\Omega)$
and if the Orlicz function $\varphi$ satisfies $\Delta_2$ condition,
then the first order distributional derivatives of $f\circ T$, given
by
$$\displaystyle{\frac{\partial}{\partial x_i}(f\circ T)=\sum_{k=1}^n\left(\frac{\partial f}{\partial x_k}\circ
T\right)\frac{\partial T_k}{\partial x_i}}\,\,,$$ for
$1,2,\cdots,n$, are in $L^\varphi(\Omega)$.
\end{lem}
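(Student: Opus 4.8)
The plan is to establish the three claims in order: $C_Tf\in L^\varphi(\Omega)$; the candidate function
$$g_i:=\sum_{k=1}^n\Bigl(\frac{\partial f}{\partial x_k}\circ T\Bigr)\frac{\partial T_k}{\partial x_i}$$
lies in $L^\varphi(\Omega)$; and $g_i$ is the $i$-th distributional derivative of $f\circ T$. The main tool for the first two claims is the change-of-variables identity coming from the Radon--Nikodym derivative: for every nonnegative $\Sigma$-measurable $h$,
$$\int_\Omega(h\circ T)\,d\mu=\int_\Omega h\,d(\mu\circ T^{-1})=\int_\Omega h\,f_T\,d\mu.$$
Taking $h=\varphi(\alpha|u|)$ for $u\in L^\varphi(\Omega)$ gives $\int_\Omega\varphi(\alpha|C_T u|)\,d\mu\le\|f_T\|_\infty\int_\Omega\varphi(\alpha|u|)\,d\mu<\infty$ for a suitable $\alpha>0$, so $C_T$ maps $L^\varphi(\Omega)$ into itself; since $C_T$ preserves $\mu$-a.e.\ convergence (by nonsingularity of $T$) it is then bounded on $L^\varphi(\Omega)$ by the closed graph theorem. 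In particular $f\circ T$ and each $\frac{\partial f}{\partial x_k}\circ T$ lie in $L^\varphi(\Omega)$; multiplying the latter by $\frac{\partial T_k}{\partial x_i}$, which has $L^\infty$-norm at most $M$, keeps the product in $L^\varphi(\Omega)$ (Orlicz spaces are ideals under multiplication by bounded functions, since $\varphi(\alpha|uv|)\le\varphi(\alpha\|v\|_\infty|u|)$), and a finite sum of such products is again in $L^\varphi(\Omega)$, so $g_i\in L^\varphi(\Omega)$.

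To identify $g_i$ with $\frac{\partial}{\partial x_i}(f\circ T)$ I would argue by approximation, and this is exactly where the $\Delta_2$-condition enters: under $\Delta_2$ one has $E^\varphi(\Omega)=L^\varphi(\Omega)$, so $C^\infty(\Omega)\cap W^{1,\varphi}(\Omega)$ is dense in $W^{1,\varphi}(\Omega)$, and I may choose $f_m$ in that class with $f_m\to f$ in $\|\cdot\|_{1,\varphi}$. For smooth $f_m$ the classical chain rule applies to $f_m\circ T$, which yields, for every $\psi\in C_c^\infty(\Omega)$,
$$\int_\Omega(f_m\circ T)\,\frac{\partial\psi}{\partial x_i}\,d\mu=-\int_\Omega\Bigl(\sum_{k=1}^n\Bigl(\frac{\partial f_m}{\partial x_k}\circ T\Bigr)\frac{\partial T_k}{\partial x_i}\Bigr)\psi\,d\mu.$$
By boundedness of $C_T$ we get $f_m\circ T\to f\circ T$ and $\frac{\partial f_m}{\partial x_k}\circ T\to\frac{\partial f}{\partial x_k}\circ T$ in $L^\varphi(\Omega)$, so, after multiplying by the bounded functions $\frac{\partial T_k}{\partial x_i}$ and summing, the right-hand integrand converges to $g_i$ in $L^\varphi(\Omega)$. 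Since $\psi$ and $\frac{\partial\psi}{\partial x_i}$ are bounded with compact support, Hölder's inequality in Orlicz spaces lets me pass to the limit on both sides, obtaining $\int_\Omega(f\circ T)\frac{\partial\psi}{\partial x_i}\,d\mu=-\int_\Omega g_i\psi\,d\mu$ for all test functions. Hence $\frac{\partial}{\partial x_i}(f\circ T)=g_i\in L^\varphi(\Omega)$, and together with $f\circ T\in L^\varphi(\Omega)$ this shows $C_Tf\in W^{1,\varphi}(\Omega)$.

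I expect the main obstacle to be this last step. First, one must be sure the classical chain rule is legitimate for $f_m\circ T$ and that integrating by parts over $\mathrm{supp}\,\psi$ produces no boundary term; this forces $T$ to be continuous with its classical partial derivatives, which is the standing hypothesis. Second, one has to make the approximation $f_m\to f$ survive both composition with $T$ and multiplication by $\frac{\partial T_k}{\partial x_i}$, and this is precisely what $\Delta_2$ buys: it provides the smooth dense subset of $W^{1,\varphi}(\Omega)$ and upgrades the modular estimate of the first paragraph to genuine norm-continuity of $C_T$ and of the multiplication operators. The remaining computations — the change of variables, the ideal property, and the Orlicz Hölder inequality — are routine once these structural points are secured.
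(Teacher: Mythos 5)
A preliminary remark on the comparison itself: the paper never proves this lemma --- it is imported verbatim as background from Arora, Datt and Verma \cite{sob} --- so your argument can only be judged on its own merits. On those terms, your plan is the standard one and is largely sound: the Radon--Nikodym change of variables $\int_\Omega (h\circ T)\,d\mu=\int_\Omega h\,f_T\,d\mu$ with $h=\varphi(\alpha|u|)$ does put $f\circ T$ and each $\bigl(\frac{\partial f}{\partial x_k}\bigr)\circ T$ into $L^\varphi(\Omega)$, multiplication by the bounded functions $\frac{\partial T_k}{\partial x_i}$ keeps you there, and the $\Delta_2$-based density of $C^\infty(\Omega)\cap W^{1,\varphi}(\Omega)$ (Donaldson--Trudinger/Gossez; the step from $E^\varphi=L^\varphi$ to this Meyers--Serrin-type statement deserves an explicit citation rather than a ``so'') together with boundedness of $C_T$ on $L^\varphi$ lets you pass to the limit in the test-function identity.

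The genuine gap is the chain-rule step for $f_m\circ T$, and your own diagnosis of it is wrong: you write that the argument ``forces $T$ to be continuous with its classical partial derivatives, which is the standing hypothesis,'' but the hypothesis only says the classical partials exist and lie in $L^\infty(\Omega)$; neither continuity of $T$ nor of $\frac{\partial T_k}{\partial x_i}$ is assumed, so you cannot simply invoke the classical chain rule and integrate by parts. The step can be repaired, but it needs an actual argument: bounded partial derivatives on convex pieces of $\Omega$ make $T$ locally Lipschitz (one-variable mean value theorem coordinate by coordinate), hence differentiable a.e.\ by Rademacher with total derivative given by the matrix of partials; then $f_m\circ T$ is locally Lipschitz, the chain rule holds a.e., and the a.e.\ derivative of a locally Lipschitz function coincides with its distributional derivative, which is what justifies your integration-by-parts display for $f_m$ (with respect to Lebesgue measure, which is what the weak derivative refers to). Two smaller points: the Orlicz--H\"older pairing with $\psi$ and $\frac{\partial\psi}{\partial x_i}$ needs compact sets to have finite measure (harmless for Lebesgue measure, but worth saying, since the paper's $\mu$ is a general $\sigma$-finite measure); and as stated the lemma asserts $C_T(f)\in W^{1,\varphi}(\Omega)$ unconditionally, with $\Delta_2$ invoked only for the explicit formula, whereas your route delivers membership in $W^{1,\varphi}(\Omega)$ only under $\Delta_2$, since the density argument is where the weak derivative is identified at all.
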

\begin{thm}
Suppose that conditions are hold as given in the previous lemma.
Then the composition operator $C_T$ on Orlicz-sobolev space
$W^{1,\varphi}(\Omega)$ is bounded and the norm of $C_T(f)$
satisfies the following inequality:
$$||C_T(f)||_{1,\varphi}\leq ||f_T||_\infty (1+n
M)||f||_{1,\varphi}.$$
\end{thm}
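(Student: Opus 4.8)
The plan is to estimate the $L^{\varphi}$-norm of $f\circ T$ and of each of its first order distributional derivatives separately, and then add the two estimates according to the definition of $||\cdot||_{1,\varphi}$. By the previous lemma --- in particular its chain-rule formula, which is available under the $\Delta_2$ condition carried over in the hypotheses --- we already know that $C_T f=f\circ T\in W^{1,\varphi}(\Omega)$ and that
$$\frac{\partial}{\partial x_i}(f\circ T)=\sum_{k=1}^{n}\left(\frac{\partial f}{\partial x_k}\circ T\right)\frac{\partial T_k}{\partial x_i},\qquad i=1,2,\dots,n,$$
with every summand in $L^{\varphi}(\Omega)$; thus only the norm inequality remains to be proved, and the linearity of $C_T$ together with that inequality will give boundedness.

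The first step I would isolate is a transfer estimate for the Orlicz norm under composition: for every $g\in L^{\varphi}(\Omega)$,
$$||g\circ T||_{\varphi}\le ||f_T||_{\infty}\,||g||_{\varphi}.$$
To prove this, I would invoke the change-of-variables identity supplied by the Radon--Nikodym derivative $f_T$, namely $\int_{\Omega}(h\circ T)\,d\mu=\int_{\Omega}h\,d(\mu\circ T^{-1})=\int_{\Omega}h\,f_T\,d\mu$ for non-negative measurable $h$. Applying this with $h=\varphi(|g|/k)$ and using the convexity of $\varphi$ together with $\varphi(0)=0$ (so that $\varphi(\lambda t)\le\lambda\,\varphi(t)$ for $0\le\lambda\le 1$), one checks that the choice $k=||f_T||_{\infty}\,||g||_{\varphi}$ forces $\int_{\Omega}\varphi(|g\circ T|/k)\,d\mu\le 1$, whence the estimate follows from the definition of the Luxemburg norm. (In the easy case $||f_T||_{\infty}<1$ one simply takes $k=||g||_{\varphi}$.)

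Granting this, the rest is routine bookkeeping. First, $||C_T f||_{\varphi}=||f\circ T||_{\varphi}\le ||f_T||_{\infty}\,||f||_{\varphi}$. Next, using the chain-rule formula, the uniform bound $||\partial T_k/\partial x_i||_{\infty}\le M$, the triangle inequality together with the monotonicity and positive homogeneity of $||\cdot||_{\varphi}$, and then the transfer estimate applied to each $g=\partial f/\partial x_k$, we obtain
$$\big|\big|\tfrac{\partial}{\partial x_i}(f\circ T)\big|\big|_{\varphi}\le M\sum_{k=1}^{n}\big|\big|\tfrac{\partial f}{\partial x_k}\circ T\big|\big|_{\varphi}\le M\,||f_T||_{\infty}\sum_{k=1}^{n}\big|\big|\tfrac{\partial f}{\partial x_k}\big|\big|_{\varphi},\qquad i=1,\dots,n.$$
Summing over $i$ and adding the estimate for $||f\circ T||_{\varphi}$ gives
$$||C_T f||_{1,\varphi}\le ||f_T||_{\infty}\,||f||_{\varphi}+nM\,||f_T||_{\infty}\sum_{k=1}^{n}\big|\big|\tfrac{\partial f}{\partial x_k}\big|\big|_{\varphi}\le ||f_T||_{\infty}\,(1+nM)\,||f||_{1,\varphi},$$
using $1\le 1+nM$ and $nM\le 1+nM$, which is the asserted inequality.

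I expect the main obstacle to be the transfer estimate $||g\circ T||_{\varphi}\le||f_T||_{\infty}||g||_{\varphi}$: because the Luxemburg norm is defined through an infimum over scaling parameters, one cannot simply pull $||f_T||_{\infty}$ outside an integral, so the delicate point is to combine the Radon--Nikodym change of variables with the convexity/homogeneity of $\varphi$ and the correct choice of $k$. Once that estimate is secured, the chain-rule formula from the lemma, the triangle inequality, and the closing arithmetic are all straightforward.
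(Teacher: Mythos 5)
The paper itself offers no proof of this theorem --- it is imported verbatim from Arora, Datt and Verma \cite{sob} --- so there is no internal argument to compare against; your strategy (change of variables through $f_T$, a Luxemburg-norm transfer estimate, then the chain rule from Lemma 1.1, the triangle inequality and monotonicity/homogeneity of $\|\cdot\|_\varphi$) is the standard route. The genuine gap is in the transfer estimate, exactly at the point you flagged as delicate. Your argument is correct precisely when $c:=\|f_T\|_\infty\ge 1$: convexity with $\varphi(0)=0$ gives $\varphi(u/c)\le\varphi(u)/c$, so for any $k'>\|g\|_\varphi$,
$$\int_\Omega\varphi\Big(\frac{|g\circ T|}{ck'}\Big)d\mu=\int_\Omega\varphi\Big(\frac{|g|}{ck'}\Big)f_T\,d\mu\le c\cdot\frac{1}{c}\int_\Omega\varphi\Big(\frac{|g|}{k'}\Big)d\mu\le 1,$$
whence $\|g\circ T\|_\varphi\le c\,\|g\|_\varphi$. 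But when $c<1$ the claimed inequality $\|g\circ T\|_\varphi\le c\|g\|_\varphi$ is simply false, and your parenthetical fix (take $k=\|g\|_\varphi$) proves only $\|g\circ T\|_\varphi\le\|g\|_\varphi$, which is weaker than what you asserted. Concretely, on $\Omega=\mathbb{R}$ with $\varphi(x)=x^2$ and $T(x)=\lambda x$, $\lambda>1$, one has $f_T\equiv 1/\lambda$, yet $\|g\circ T\|_\varphi=\lambda^{-1/2}\|g\|_\varphi>\lambda^{-1}\|g\|_\varphi$. What the convexity argument genuinely yields is $\|g\circ T\|_\varphi\le\max(1,\|f_T\|_\infty)\,\|g\|_\varphi$, and feeding that into your (otherwise correct) bookkeeping gives $\|C_Tf\|_{1,\varphi}\le\max(1,\|f_T\|_\infty)(1+nM)\|f\|_{1,\varphi}$.

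Moreover this is not something a cleverer choice of $k$ can repair: the literal inequality of the theorem fails when $\|f_T\|_\infty<1$. In the same example ($n=1$, $M=\lambda$, $f_T\equiv1/\lambda$, $\varphi$ satisfying $\Delta_2$) the chain rule gives $\frac{d}{dx}(f\circ T)(x)=\lambda f'(\lambda x)$, so $\|C_Tf\|_{1,\varphi}=\lambda^{-1/2}\|f\|_\varphi+\lambda^{1/2}\|f'\|_\varphi$, while the asserted bound is $\frac{1+\lambda}{\lambda}\big(\|f\|_\varphi+\|f'\|_\varphi\big)\le 2\big(\|f\|_\varphi+\|f'\|_\varphi\big)$; choosing $f$ with $\|f'\|_\varphi\ge\|f\|_\varphi$ and $\lambda$ large violates it. So the statement is only correct as written under the implicit normalization $\|f_T\|_\infty\ge1$, or with $\max(1,\|f_T\|_\infty)$ in place of $\|f_T\|_\infty$; your write-up should either state that hypothesis explicitly or prove the corrected constant. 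Apart from this one point, the remaining steps --- the chain-rule formula from Lemma 1.1, the estimate $\|h\,\partial T_k/\partial x_i\|_\varphi\le M\|h\|_\varphi$ via monotonicity and homogeneity of the Luxemburg norm, and the closing arithmetic using $1\le 1+nM$ and $nM\le 1+nM$ --- are sound.
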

In the present paper, we are going to present some classical
properties of Composition operators on Orlicz-Sobolev space
$W^{1,\varphi}(\Omega)$, which have not been proved earlier. We
divide this paper into two sections - section $2,3$. In the section
$2$, we establish a necessary and sufficient condition for the
injectivity of the composition operator on Orlicz-Sobolev space. And
in the last section, composition operators on Orlicz-Sobolev space
$W^{1,\varphi}(\Omega)$ with finite ascent as well as infinite
ascent are studied.

\section{Composition operator on Orlicz-Sobolev spaces}
We begin by the defining Orlicz-Sobolev space. For a given Orlicz
function $\varphi$, the corresponding Orlicz-Sobolev space is given
by
$$W^{1,\varphi}(\Omega) =\left\{ f\in
L^\varphi(\Omega): \frac{\partial f}{\partial x_i}\in
L^\varphi(\Omega)\,\,\, \mbox{for}\,\, i=1,2,\cdots,n\right\}.$$
Assume that the conditions as given in the lemma $1.1.$ holds. Let
$C_T:W^{1,\varphi}(\Omega)\rightarrow W^{1,\varphi}(\Omega)$ be a
nonzero composition operator and $\Omega_\circ= \{x\in \Omega:
f_T(x)=\frac{d\mu\circ T^{-1}}{d\mu}(x)=0\}$. Now consider the
subset
$$W^{1,\varphi}(\Omega_\circ)= \left\{f\in W^{1,\varphi}(\Omega):
f(x)=0\,\,\mbox{for}\,\,\Omega\setminus \Omega_\circ\right\}.$$ Then
the set $\Omega_\circ$ is obviously measurable. Note that if
$\mu(\Omega\setminus\Omega_\circ)=0$, then $\frac{d\mu\circ
T^{-1}}{d\mu}(x)=0$ almost everywhere and $||\frac{d\mu\circ
T^{-1}}{d\mu}||_\infty=0$. Thus in this case, the corresponding
composition operator $C_T$ will be the zero operator (by the theorem
$1.1.$). Hence for a nonzero composition operator $C_T$, we have
$\mu(\Omega\setminus\Omega_\circ)>0$. We start with the following
result.
\begin{lem}
If $ f \in W^{1,\varphi}(\Omega_\circ)$ then the weak derivative of
$f$, $\frac{\partial f}{\partial x_i}(x)=0$ on
$\Omega\setminus\Omega_\circ$, for every $i=1,2,\cdots,n.$
\end{lem}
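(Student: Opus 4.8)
The plan is to deduce the statement from a classical property of weak derivatives, namely that the weak gradient of a Sobolev function vanishes almost everywhere on each of its level sets. First I would record the embedding $W^{1,\varphi}(\Omega)\subseteq W^{1,1}_{\mathrm{loc}}(\Omega)$. Since $\varphi$ is an N-function it has a complementary N-function $\tilde\varphi$, and for every compact set $K\subset\Omega$ the function $\chi_K$ belongs to $L^{\tilde\varphi}(\Omega)$; hence the Orlicz--H\"older inequality gives $\int_K|g|\,d\mu\le 2\,||g||_\varphi\,||\chi_K||_{\tilde\varphi}<\infty$ for every $g\in L^\varphi(\Omega)$. Applying this to $g=f$ and to $g=\frac{\partial f}{\partial x_i}$ for each $i$ shows that $f\in W^{1,1}_{\mathrm{loc}}(\Omega)$, so that all the standard tools for Sobolev functions are available for $f$.

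Next, from $f\in W^{1,\varphi}(\Omega_\circ)$ we know $f=0$ a.e.\ on $\Omega\setminus\Omega_\circ$, i.e.\ $\Omega\setminus\Omega_\circ$ is contained, up to a set of measure zero, in the level set $\{x\in\Omega:f(x)=0\}$. It therefore suffices to prove that $\frac{\partial f}{\partial x_i}=0$ a.e.\ on $\{f=0\}$. To this end I would use the truncation $f=f^{+}-f^{-}$, $f^{\pm}\ge 0$, together with the identities
$$\frac{\partial f^{+}}{\partial x_i}=\chi_{\{f>0\}}\,\frac{\partial f}{\partial x_i},\qquad \frac{\partial f^{-}}{\partial x_i}=-\,\chi_{\{f<0\}}\,\frac{\partial f}{\partial x_i}\qquad(i=1,\dots,n),$$
valid in $W^{1,1}_{\mathrm{loc}}(\Omega)$. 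These are obtained by approximating $t\mapsto t^{+}$ by the $C^{1}$ functions $F_\varepsilon(t)=\sqrt{t^{2}+\varepsilon^{2}}-\varepsilon$ for $t\ge 0$ and $F_\varepsilon(t)=0$ for $t<0$, applying the chain rule $\frac{\partial}{\partial x_i}\big(F_\varepsilon(f)\big)=F_\varepsilon'(f)\frac{\partial f}{\partial x_i}$, and letting $\varepsilon\to0$ (note $0\le F_\varepsilon'\le 1$ and $F_\varepsilon'(f)\to\chi_{\{f>0\}}$ pointwise, so dominated convergence applies). Subtracting the two identities yields
$$\frac{\partial f}{\partial x_i}=\frac{\partial f^{+}}{\partial x_i}-\frac{\partial f^{-}}{\partial x_i}=\big(\chi_{\{f>0\}}+\chi_{\{f<0\}}\big)\frac{\partial f}{\partial x_i},$$
so that $\chi_{\{f=0\}}\,\frac{\partial f}{\partial x_i}=0$ a.e. Consequently $\frac{\partial f}{\partial x_i}=0$ a.e.\ on $\{f=0\}$, and in particular on $\Omega\setminus\Omega_\circ$, for every $i=1,\dots,n$, which is the assertion.

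The main point to be careful about is that $\Omega_\circ$ is only a measurable subset of $\Omega$: $\Omega\setminus\Omega_\circ$ need not be open, so one cannot simply say that $f$ vanishes on a neighbourhood of a.e.\ point and differentiate. The level--set property of Sobolev functions (equivalently, the truncation identities above) is precisely what bypasses this difficulty. The auxiliary facts used --- the Orlicz--H\"older inequality and the chain rule for $C^{1}$ superpositions with $W^{1,1}_{\mathrm{loc}}$ functions --- are classical, so once the reduction to $W^{1,1}_{\mathrm{loc}}(\Omega)$ is made the remaining verification is routine.
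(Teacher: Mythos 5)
Your argument is correct, but it is not the route the paper takes. The paper argues directly with test functions: from $f=0$ on $\Omega\setminus\Omega_\circ$ it writes $\int_{\Omega\setminus\Omega_\circ} f\,\partial_i\phi\,d\mu=0$, then integrates by parts over $\Omega\setminus\Omega_\circ$ against $\phi\in C^\infty_0(\Omega\setminus\Omega_\circ)$ and concludes $\int_{\Omega\setminus\Omega_\circ}\partial_i f\,\phi\,d\mu=0$, hence $\partial_i f=0$ there. That argument implicitly treats $\Omega\setminus\Omega_\circ$ as if it were open (otherwise $C^\infty_0(\Omega\setminus\Omega_\circ)$ and the localized integration by parts are not available, and the vanishing of these integrals against such test functions would in any case only control $\partial_i f$ on the interior), whereas $\Omega_\circ=\{f_T=0\}$ is merely measurable --- exactly the difficulty you flag at the end of your proposal. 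Your route instead reduces to $W^{1,1}_{\mathrm{loc}}(\Omega)$ via the Orlicz--H\"older inequality and invokes the classical level-set property of weak derivatives, proved by the truncation identities $\partial_i f^{\pm}=\pm\chi_{\{\pm f>0\}}\partial_i f$ obtained from the $C^1$ approximations $F_\varepsilon$; this yields $\partial_i f=0$ a.e.\ on $\{f=0\}\supseteq\Omega\setminus\Omega_\circ$ with no topological assumption on the set, so it is the more robust (and in this setting the genuinely rigorous) argument, at the cost of importing the chain-rule machinery. One small caveat: your embedding step needs $\mu(K)<\infty$ for compact $K$ and tacitly identifies $\mu$ with the measure used to define distributional derivatives; this is consistent with the paper, which itself integrates by parts with respect to $d\mu$, but it is worth stating as a hypothesis.
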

\begin{proof}
Let $ f \in W^{1,\varphi}(\Omega_\circ)$. Then $f\in
L^\varphi(\Omega)$ and $\frac{\partial f}{\partial x_i} \in
L^\varphi(\Omega)$ for $1=1,2,\cdots,n$. Since weak derivative of
$f$ exists on $\Omega$, hence $f$ has weak derivative on
$\Omega\setminus\Omega_\circ\subset\Omega$. Now $f(x)=0$ on
$\Omega\setminus\Omega_\circ$ implies that
$\displaystyle{\int_{\Omega\setminus\Omega_\circ} f(x)\frac{\partial
\phi(x)}{\partial x_i} d\mu=0}$ for all test function $\phi(x) \in
C^\infty _0(\Omega)$ and $i=1,2,\cdots,n.$ We have,
$$\int_{\Omega\setminus\Omega_\circ} f(x)\frac{\partial
\phi(x)}{\partial x_i} d\mu = - \int_{\Omega\setminus\Omega_\circ}
\frac{\partial f(x)}{\partial x_i} \phi(x) d\mu\,\,,\forall\,\phi
\in C^\infty _0 (\Omega\setminus\Omega_\circ).$$ Since $C^\infty _0
(\Omega\setminus\Omega_\circ)\subset C^\infty_0(\Omega)$ hence
\begin{eqnarray*}
\int_{\Omega\setminus\Omega_\circ} \frac{\partial f(x)}{\partial
x_i} \phi(x) d\mu & = & - \int_{\Omega\setminus\Omega_\circ} f(x)
\frac{\partial \phi(x)}{\partial x_i} d\mu\\
 & = &0
\end{eqnarray*}
for all $\phi \in C^\infty _0 (\Omega\setminus\Omega_\circ)$. Now as
$\mu(\Omega\setminus\Omega_\circ)>0$, it follows that weak
derivative of $f$ is zero, i.e.,  $\frac{\partial f}{\partial
x_i}(x)=0$ on $\Omega\setminus\Omega_\circ$.
\end{proof}
\begin{thm}
Let $C_T$ be a composition operator on Orlicz-Sobolev space
$W^{1,\varphi}(\Omega)$. Then $ker\,
C_T=W^{1,\varphi}(\Omega_\circ)$.
\end{thm}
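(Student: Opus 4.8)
The goal is to show the kernel of $C_T$ on $W^{1,\varphi}(\Omega)$ equals $W^{1,\varphi}(\Omega_\circ)$, so I would prove the two inclusions separately. The natural starting point is to unwind what $C_T f = 0$ means as an element of the Orlicz-Sobolev space: it says $f\circ T = 0$ $\mu$-a.e. on $\Omega$ \emph{and} (by Lemma 1.1) the distributional derivatives $\sum_k (\partial_k f\circ T)\,\partial T_k/\partial x_i$ vanish a.e. But the key observation is that the condition $f\circ T=0$ a.e.\ already carries essentially all the information, because membership in $W^{1,\varphi}(\Omega)$ forces the derivatives along. So I would focus on translating $f\circ T = 0$ into a statement about where $f$ itself vanishes, using the Radon-Nikodym derivative $f_T = d\mu\circ T^{-1}/d\mu$.

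\textbf{Inclusion $W^{1,\varphi}(\Omega_\circ)\subseteq \ker C_T$.} Let $f\in W^{1,\varphi}(\Omega_\circ)$, so $f=0$ on $\Omega\setminus\Omega_\circ$. I want $f\circ T = 0$ $\mu$-a.e. Consider the set $A=\{x: f(x)\neq 0\}\subseteq \Omega_\circ$. Then $\{x: (f\circ T)(x)\neq 0\} = T^{-1}(A)$ up to null sets, and $\mu(T^{-1}(A)) = \mu\circ T^{-1}(A) = \int_A f_T\,d\mu = 0$ since $f_T\equiv 0$ on $\Omega_\circ\supseteq A$. Hence $f\circ T=0$ a.e. By Lemma 1.1 applied to this $f$ (the hypotheses of Lemma 1.1 are standing assumptions here), the distributional derivatives of $f\circ T$ are given by the chain rule expression, and since $f\circ T = 0$ identically its derivatives vanish too; thus $C_T f = 0$ in $W^{1,\varphi}(\Omega)$, so $f\in\ker C_T$.

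\textbf{Inclusion $\ker C_T\subseteq W^{1,\varphi}(\Omega_\circ)$.} Let $f\in W^{1,\varphi}(\Omega)$ with $C_T f=0$, i.e.\ $f\circ T=0$ $\mu$-a.e. I must show $f=0$ a.e.\ on $\Omega\setminus\Omega_\circ$. Again set $A=\{x: f(x)\neq 0\}$; then $\mu(T^{-1}(A)) = 0$, and by Radon-Nikodym $\int_A f_T\,d\mu = \mu\circ T^{-1}(A) = 0$. Since $f_T\geq 0$, this forces $f_T = 0$ $\mu$-a.e.\ on $A$, i.e.\ $A\subseteq \Omega_\circ$ up to a null set. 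Equivalently $f=0$ a.e.\ on $\Omega\setminus\Omega_\circ$, which is exactly the defining condition of $W^{1,\varphi}(\Omega_\circ)$; combined with $f\in W^{1,\varphi}(\Omega)$ this gives $f\in W^{1,\varphi}(\Omega_\circ)$. (Lemma 2.1 then tells us the weak derivatives of such an $f$ also vanish off $\Omega_\circ$, confirming consistency, though it is not strictly needed for the inclusion.)

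\textbf{Main obstacle.} The only delicate point is the measure-theoretic bookkeeping around $T^{-1}(A)$: one needs $A$ to be measurable (it is, since $f$ is $\Sigma$-measurable) and one must be careful that $\{(f\circ T)\neq 0\}$ and $T^{-1}(A)$ agree up to $\mu$-null sets, plus invoke nonsingularity of $T$ so that the a.e.\ statements transfer cleanly. Once the identity $\int_A f_T\,d\mu = \mu(T^{-1}(A))$ is in hand, both inclusions are immediate from the sign condition $f_T\geq 0$. I would also note explicitly that $\mu(\Omega\setminus\Omega_\circ)>0$ (established just before the theorem) is what keeps $W^{1,\varphi}(\Omega_\circ)$ from being all of $W^{1,\varphi}(\Omega)$, so the characterization is non-trivial.
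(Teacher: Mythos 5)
Your proof is correct, and it takes a genuinely more elementary route than the paper. The paper argues through the Orlicz modular: from $\|f\circ T\|_\varphi=0$ it invokes the change-of-variables identity $\int_\Omega\varphi(\alpha|f\circ T|)\,d\mu=\int_\Omega\varphi(\alpha|f|)\,f_T\,d\mu$ to read off that $f_T$ vanishes on $S_f$, and, for the reverse inclusion, it separately verifies that each weak derivative $\frac{\partial}{\partial x_i}(f\circ T)$ vanishes by combining the chain rule of Lemma 1.1 with Lemma 2.1 (weak derivatives of $f$ vanish off $\Omega_\circ$) and a second change-of-variables computation. You instead work purely at the level of sets: $\{f\circ T\neq 0\}=T^{-1}(\{f\neq 0\})$ and $\mu(T^{-1}(A))=\int_A f_T\,d\mu$, which uses only the defining property of the Radon--Nikodym derivative on indicator sets, and you dispense with the derivative check by observing that $f\circ T=0$ a.e.\ already makes $C_Tf$ the zero equivalence class of $W^{1,\varphi}(\Omega)$ (given the standing fact that $C_T$ maps $W^{1,\varphi}(\Omega)$ into itself), so its weak derivatives vanish automatically. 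What your approach buys is brevity and independence from the $\Delta_2$-dependent chain-rule formula and from Lemma 2.1 altogether, exposing the result as purely measure-theoretic; what the paper's computation buys is that it stays within the Orlicz-modular framework used for the boundedness results and exhibits explicitly how the derivative terms die. Your closing caveats (measurability of $\{f\neq0\}$, null-set bookkeeping via nonsingularity of $T$, and the a.e.\ reading of the condition $f=0$ on $\Omega\setminus\Omega_\circ$) are exactly the right points to flag, and they are handled at the same level of rigor as in the paper.
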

\begin{proof}
Let $f\in ker\, C_T$. Then $f\circ T=0$ in $W^{1,\varphi}(\Omega)$.
This implies that $||f\circ T||_{1,\varphi}=0$. But $||f\circ
T||_{1,\varphi}= ||f\circ T||_\varphi + \displaystyle{\sum_{i=1}
^n\big|\big|\frac{\partial}{\partial x_i}(f\circ
T)\big|\big|_\varphi}$. Therefore, we have $||f\circ T||_\varphi=0$
and $||\frac{\partial}{\partial x_i}(f\circ T)||_\varphi=0$ for
$i=1,2,\cdots,n$. This shows that $f\circ T =0$ in $L^\varphi$.
Hence, there exists $\alpha>0$ such that
\begin{eqnarray}
0= \int_\Omega \varphi(\alpha|f\circ T|)d\mu = \int_\Omega
\varphi(\alpha |f|)\frac{d\mu\circ T^{-1}}{d\mu} d\mu
\end{eqnarray}
Suppose $S_f= \{x\in \Omega: f(x)\neq 0\}$. Then, from above it
follows that $\frac{d\mu\circ T^{-1}}{d\mu}|_{S_f}=0$. Since
\begin{eqnarray*}
 W^{1,\varphi}(\Omega_\circ) & = & \left\{ f\in
W^{1,\varphi}(\Omega):
f(x)=0\,\,\mbox{for}\,\,\Omega\setminus\Omega_\circ\right\}\\
&= & \left\{ f\in W^{1,\varphi}(\Omega): S_f\subset
\Omega_\circ\right\}\\
& = & \left\{f\in W^{1,\varphi}(\Omega):\frac{d\mu\circ
T^{-1}}{d\mu}\big|_{S_f}=0\right\}\,\,,
\end{eqnarray*}
hence, $f\in W^{1,\varphi}(\Omega_\circ)$. Therefore, $ker\, C_T
\subseteq W^{1,\varphi}(\Omega_\circ)$.\\
Conversely, suppose that $f\in W^{1,\varphi}(\Omega_\circ)$. Then $
f\in L^\varphi(\Omega)$. Hence there exists $\alpha>0$ such that
$\displaystyle{\int_\Omega \varphi(\alpha |f|)d\mu <\infty}$, for
some $\alpha>0$. Now we have,
\begin{eqnarray*}
\int_\Omega \varphi(\alpha|f\circ T|)d\mu & = & \int_\Omega
\varphi(\alpha |f|) \frac{d\mu\circ T^{-1}}{d\mu}d \mu \\
& = & \int_{\Omega\setminus\Omega_\circ} \varphi(\alpha |f|)
\frac{d\mu\circ T^{-1}}{d\mu}d \mu  + \int_{\Omega_\circ}
\varphi(\alpha |f|) \frac{d\mu\circ T^{-1}}{d\mu}d \mu\\
& = & 0\,\,\,\,\,\,\,\,[\,\because\,\,f(x)=0\,\, \mbox{on}\,\,
\Omega\setminus\Omega_\circ\,\,\mbox{and}\,\, \frac{d\mu\circ
T^{-1}}{d\mu}(x)=0\,\,\mbox{on}\,\, \Omega_\circ\,]
\end{eqnarray*}
This implies that $f\circ T=0$ in $L^\varphi(\Omega)$ and hence
$||f\circ T||_\varphi=0$. Now by the lemma $1.1.$ we have,
$$\displaystyle{\frac{\partial}{\partial x_i}(f\circ T)=\sum_{k=1}^n\left(\frac{\partial f}{\partial x_k}\circ
T\right)\frac{\partial T_k}{\partial x_i}}.$$ Since, $f\in
W^{1,\varphi}(\Omega_\circ)$ and $C_T$ is a composition operator on
$W^{1,\varphi}(\Omega)$, hence weak derivative of $f\circ T$,
$\frac{\partial }{\partial x_i}(f\circ T) \in L^\varphi(\Omega)$,
for every $i=1,2\cdots,n$. Now for some $\beta>0$, we have
\begin{eqnarray*}
\int_\Omega \varphi\left(\beta
\bigg|\sum_{k=1}^n\left(\frac{\partial f}{\partial x_k}\circ
T\right)\frac{\partial T_k}{\partial x_i}\bigg|\right)d\mu & = &
\int_\Omega \varphi\left(\beta \bigg|\sum_{k=1}^n \frac{\partial
f}{\partial x_i}(x)\frac{\partial T_k}{\partial
x_i}(T^{-1}(x))\bigg|\right)\frac{d\mu\circ T^{-1}}{d\mu}d\mu\\
& = & \int_{\Omega\setminus \Omega_\circ} + \int_{\Omega_\circ}\\
&= & 0
\end{eqnarray*}
as $\frac{\partial f}{\partial x_k}(x)=0$ on $\Omega\setminus
\Omega_\circ$ for $k=1,2,\cdots,n$ and $\frac{d\mu\circ
T^{-1}}{d\mu}(x)=0$ on $\Omega_\circ$. This shows that
$\frac{\partial}{\partial x_i}(f\circ T) =0$ in $L^\varphi(\Omega)$
and hence $\big|\big|\frac{\partial}{\partial x_i}(f\circ
T)\big|\big|_\varphi =0$ for $i=1,2,\cdots,n$. Thus we have,
\begin{eqnarray*}
 ||f\circ
T||_{1,\varphi} & = & ||f\circ T||_\varphi +
\displaystyle{\sum_{i=1}
^n\bigg|\bigg|\frac{\partial}{\partial x_i}(f\circ T)\bigg|\bigg|_\varphi}\\
& = & 0
\end{eqnarray*}
Therefore, $f\circ T =0$ in $W^{1,\varphi}(\Omega)$ and
$W^{1,\varphi}(\Omega_\circ)\subseteq ker\, C_T$. Hence the result
follows.
\end{proof}
The following theorem gives a necessary and sufficient condition for
injectivity of composition operator $C_T$ on Orlicz-Sobolev space
$W^{1,\varphi}(\Omega)$. We say that $T: \Omega \rightarrow \Omega$
is essentially surjective if $\mu(\Omega\setminus T(\Omega))=0$.
\begin{thm}
The composition operator $C_T$ induced by $T$ on Orlicz-Sobolev
space $W^{1,\varphi}(\Omega)$ is injective if and only if $T$ is
essentially surjective.
\end{thm}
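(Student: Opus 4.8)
The plan is to reduce the statement to a purely measure-theoretic fact using the kernel description $\ker C_T=W^{1,\varphi}(\Omega_\circ)$ obtained in the preceding theorem. Thus $C_T$ is injective if and only if $W^{1,\varphi}(\Omega_\circ)=\{0\}$, and I claim the latter holds precisely when $T$ is essentially surjective. The bridge between the two conditions is the identity
$$\mu\big(\Omega\setminus T(\Omega)\big)=\mu(\Omega_\circ),$$
valid up to $\mu$-null sets, from which $T$ essentially surjective $\Longleftrightarrow\ \mu(\Omega_\circ)=0$.

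First I would prove this identity by two inclusions, each using only nonsingularity of $T$. Since $T^{-1}\big(\Omega\setminus T(\Omega)\big)=\emptyset$, we get $\int_{\Omega\setminus T(\Omega)}f_T\,d\mu=\mu\circ T^{-1}\big(\Omega\setminus T(\Omega)\big)=0$, and $f_T\ge 0$ forces $f_T=0$ a.e.\ on $\Omega\setminus T(\Omega)$; hence $\Omega\setminus T(\Omega)\subseteq\Omega_\circ$ modulo a null set. Conversely, if $B\subseteq\{f_T>0\}$ is measurable and disjoint from $T(\Omega)$, then $T^{-1}(B)=\emptyset$, so $\int_B f_T\,d\mu=\mu\circ T^{-1}(B)=0$, which is impossible unless $\mu(B)=0$; hence $\{f_T>0\}\setminus T(\Omega)$ is null, i.e.\ $\Omega\setminus\Omega_\circ\subseteq T(\Omega)$ modulo a null set. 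Since $\Omega$ is the disjoint union of $\Omega_\circ$ and $\{f_T>0\}$, combining the two inclusions yields $\Omega\setminus T(\Omega)=\Omega_\circ$ up to null sets, and in particular the displayed identity. (Here $T(\Omega)$ should be read as measurable; as $T$ is $\Sigma$-measurable on the open set $\Omega\subseteq\mathbb{R}^n$ its image is analytic, hence universally measurable, and otherwise one replaces it by a measurable hull.)

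With the identity in hand the theorem splits into two implications. If $T$ is essentially surjective then $\mu(\Omega_\circ)=0$; for any $f\in\ker C_T$ the kernel theorem gives $f\in W^{1,\varphi}(\Omega_\circ)$, i.e.\ $f=0$ a.e.\ on $\Omega\setminus\Omega_\circ$, so $f=0$ a.e.\ and $C_T$ is injective. If $T$ is not essentially surjective then $\mu(\Omega_\circ)>0$, and it suffices to produce a single nonzero element of $W^{1,\varphi}(\Omega_\circ)=\ker C_T$: when $\Omega_\circ$ has nonempty interior one simply takes a nontrivial $\phi\in C_0^\infty$ supported there. I expect this last construction to be the one delicate point, since a set of positive measure need not contain an open set; to handle the general case one must use regularity or geometry of $\Omega_\circ=\{f_T=0\}$ (for instance that it may be taken essentially open, or that $\mu(\Omega_\circ)>0$ already forces an open subset of $\Omega$ on which a suitably built Sobolev bump is annihilated by $C_T$). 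That is where I would concentrate the effort; the remaining steps are the routine changes of variable $\int_\Omega\varphi(\alpha|f\circ T|)\,d\mu=\int_\Omega\varphi(\alpha|f|)\,\tfrac{d\mu\circ T^{-1}}{d\mu}\,d\mu$ already used in the kernel theorem.
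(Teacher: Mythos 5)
Your reduction via $\ker C_T=W^{1,\varphi}(\Omega_\circ)$ matches the paper, but your ``bridge identity'' $\mu(\Omega\setminus T(\Omega))=\mu(\Omega_\circ)$ is not established, and the half of it that your ``if'' direction rests on does not follow from nonsingularity. Look closely at your two inclusions: they are the same statement. ``$f_T=0$ a.e.\ on $\Omega\setminus T(\Omega)$'' and ``$\{f_T>0\}\setminus T(\Omega)$ is null'' both say that $(\Omega\setminus T(\Omega))\cap\{f_T>0\}$ is $\mu$-null, so combining them yields only $\Omega\setminus T(\Omega)\subseteq\Omega_\circ$ modulo null sets, i.e.\ $\mu(\Omega_\circ)=0\Rightarrow T$ essentially surjective. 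The reverse inclusion $\Omega_\circ\subseteq\Omega\setminus T(\Omega)$ (equivalently, $f_T>0$ a.e.\ on $T(\Omega)$) is exactly what you need for ``essentially surjective $\Rightarrow\mu(\Omega_\circ)=0$'', and it can fail for a general nonsingular $T$: a nonsingular map can send a $\mu$-null set onto a set of positive measure, and there $f_T$ vanishes even though the points lie in $T(\Omega)$. For instance, on $\Omega=(0,1)$ let $T(x)=x/2$ off the Cantor set $C$ and let $T$ map $C$ measurably onto $[1/2,1)$; then $T$ is nonsingular and surjective, yet $f_T=0$ a.e.\ on $(1/2,1)$, so $\mu(\Omega_\circ)>0$. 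Hence your proof of ``essentially surjective $\Rightarrow$ injective'' collapses at this step. The paper argues this direction differently and without any reference to $\Omega_\circ$: writing $\Omega=T(\Omega)\cup A$ with $\mu(A)=0$, it observes that $C_Tf=0$ forces $f$ to vanish on $T(\Omega)$, hence a.e.\ (one can debate how carefully that handles null sets, since $f\circ T=0$ a.e.\ only gives $f=0$ on $T(\Omega\setminus E)$ for some null $E$, but at least it does not route through the implication you rely on, which needs more than nonsingularity of $T$).

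For the converse direction (injective $\Rightarrow$ essentially surjective) your plan coincides with the paper's: from $\ker C_T=W^{1,\varphi}(\Omega_\circ)=\{0\}$ one wants $\mu(\Omega_\circ)=0$, and then the inclusion $\Omega\setminus T(\Omega)\subseteq\Omega_\circ$, which you prove exactly as the paper does, finishes the argument. The delicate point you flag --- that $\mu(\Omega_\circ)>0$ should produce a nonzero element of $W^{1,\varphi}(\Omega_\circ)$ even though $\Omega_\circ$ may have empty interior, so a $C_0^\infty$ bump need not exist --- is a real issue, and you leave it unresolved; note that the paper does not resolve it either, as it simply asserts ``Thus $\mu(\Omega_\circ)=0$'' without any construction. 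So on that half you have identified, but not filled, a gap the paper glosses over, while on the other half your argument has a genuine logical break that the paper's (different) argument avoids.
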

\begin{proof}
Suppose that $C_T$ is injective. Then $ker\, C_T =\{0\}$. But $ker\,
C_T= W^{1,\varphi}(\Omega_\circ)= \left\{ f\in
W^{1,\varphi}(\Omega): \frac{d\mu\circ
T^{-1}}{d\mu}\big|_{S_f}=0\right\}$. Therefore, $ker\, C_T=\{0\}$
implies that $W^{1,\varphi}(\Omega_\circ)=\{0\}$. This shows that
$f=0$ a.e. if $\frac{d\mu\circ T^{-1}}{d\mu}|_{S_f}=0$. Hence it
follows that $\frac{d\mu\circ T^{-1}}{d\mu}\neq 0$ a.e.. Thus
$\mu(\Omega_\circ)=0$. To complete the proof, it suffices to show
that $\Omega\setminus\Omega_\circ = T(\Omega)$. Note that
$\Omega\setminus\Omega_\circ= S_{\frac{d\mu\circ T^{-1}}{d\mu}}$.\\
Let $E\subset \Omega\setminus T(\Omega)$. Then $T^{-1}(E)=\emptyset$
and hence, $0= \mu(T^{-1}(E))= \displaystyle{\int_E \frac{d\mu\circ
T^{-1}}{d\mu}d\mu}$ implies that $\frac{d\mu\circ
T^{-1}}{d\mu}|_E=0$. This shows that $E\subset\Omega_\circ$ and
hence $\Omega\setminus T(\Omega)\subseteq\Omega_\circ$. Thus, we
have $$\mu(\Omega_\circ)=0 \Rightarrow \mu(\Omega\setminus
T(\Omega))=0.$$ Conversely, assume that $T$ is essentially
surjective so that $\Omega= T(\Omega)\cup A$, where $\mu(A)=0$.
Then, clearly, we have
\begin{eqnarray*}
ker \, C_T & = & \{ f\in W^{1,\varphi}(\Omega): C_T f=0\}\\
& = & \{ f\in W^{1,\varphi}(\Omega): f|_{T(\Omega)}=0\}\\
& = & \{0\}\,\,\,\,\,\,\,\,[\,\because \mu(A)=0\,]
\end{eqnarray*}
Therefore, $C_T$ is injective.
\end{proof}
\section{Ascent of the Composition Operator}
 \par F. Riesz in \cite{func13} introduced the concept of ascent and descent for a linear operator in a connection
with his investigation of compact linear operators. The study of
ascent and descent has been done as a part of spectral properties of
an operator (\cite{func14}, \cite{func15}). Before going to start,
let us recall the notion of ascent of an operator on an arbitrary
vector space $X$.\\
\par If $H: X \rightarrow X$ is an operator on $X$, then the null
space of $H^k$ is a $H$-invariant subspace of $X$, that is,
$H(ker\,(H^k))\subseteq ker\,(H^k)$ for every positive integer $k$.
Indeed, if $x\in \,ker\,(H^k)$ then $H^k(x)=0$ and therefore,
$H^k(H(x))=H(H^k(x))=0$, i.e., $H(x)\in ker\,(H^k)$. Thus we have
the following subspace inclusions:
$$ker\,(H)\subseteq ker\,(H^2) \subseteq ker(H^3)\subseteq \cdots$$
Following definitions and well known results are relevant to our
context (\cite{func11}, \cite{func}, \cite{func12});
\begin{thm}
 For an operator $H: X \rightarrow X$ on a vector space, if $ker\,(H^k)= ker\,(H^{k+1})$ for some $k$, then $ker\,(H^n)= ker(H^k)$ for all
$n\geq k$.
\end{thm}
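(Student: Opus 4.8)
The plan is to prove the slightly restated claim $\ker(H^{k+m}) = \ker(H^k)$ for every integer $m \geq 0$ by induction on $m$; taking $n = k+m$ then gives the theorem. The base case $m = 0$ is a tautology, so all the content sits in the inductive step, and the only genuine ingredient there is one short manipulation of kernels which already appears, in embryo, in the paragraph preceding the statement (where $H(\ker(H^k)) \subseteq \ker(H^k)$ is verified).

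For the inductive step I would assume $\ker(H^{k+m}) = \ker(H^k)$ and prove $\ker(H^{k+m+1}) = \ker(H^{k+m})$; combined with the induction hypothesis this gives $\ker(H^{k+m+1}) = \ker(H^k)$ and closes the induction. The inclusion $\ker(H^{k+m}) \subseteq \ker(H^{k+m+1})$ is automatic from the ascending chain recalled in the text, so only the reverse inclusion needs an argument. Given $x \in \ker(H^{k+m+1})$, I would write $0 = H^{k+m+1}(x) = H^{k+1}\big(H^m(x)\big)$, deduce $H^m(x) \in \ker(H^{k+1}) = \ker(H^k)$ from the hypothesis of the theorem, and conclude $0 = H^k\big(H^m(x)\big) = H^{k+m}(x)$, i.e. $x \in \ker(H^{k+m})$.

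There is essentially no obstacle to overcome here: the proof is a bookkeeping induction driven entirely by the single equality $\ker(H^k) = \ker(H^{k+1})$. The only point requiring a little care is to invoke that hypothesis in the form $\ker(H^{k+1}) = \ker(H^k)$ at every stage of the induction — rather than trying to propagate a "consecutive stabilization" one power at a time — since this keeps the argument uniform in $m$ and avoids having to re-establish the intermediate equalities $\ker(H^{k+j}) = \ker(H^{k+j+1})$ separately.
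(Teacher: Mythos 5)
Your proof is correct: the induction on $m$ with the single key step $H^{k+m+1}(x)=H^{k+1}\bigl(H^m(x)\bigr)=0 \Rightarrow H^m(x)\in\ker(H^{k+1})=\ker(H^k) \Rightarrow H^{k+m}(x)=0$ is exactly the standard argument. Note that the paper itself offers no proof of this statement --- it is quoted as a well-known result with citations to the functional-analysis references --- so there is nothing in the text to diverge from; your write-up simply supplies the routine argument the paper leaves to those sources.
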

We now introduce ascent of an operator. The ascent of $H$ is the
smallest natural number $k$ such that $ker\,(H^k)= ker\,(H^{k+1})$.
If there is no $k\in \mathbb{N}$ such that
 $ker\,(H^k)= ker\,(H^{k+1})$, then we say that ascent of $H$ is
 infinite.\\
\par Now we are ready to study ascent of the composition operator $C_T$ on Orlicz-Sobolev space $W^{1,\varphi}(\Omega)$.
Observe that if $T$ is a non singular measurable transformation on
$\Omega$, then $T^k$ is also non singular measurable transformation
for every $k\geq2$ with respect to the measure $\mu$. Hence $T^k$
also induces a composition operator $C_{T^k}$. Note that for every
measurable function $f$, $C^k_T(f)= f\circ T^k = C_{T^k}(f)$. Also
we have
$$\cdot \cdot\cdot \ll\mu\circ T^{-(k+1)}\ll\mu\circ T^{-k}\ll \cdot \cdot \cdot \ll\mu\circ T^{-1}\ll\mu .$$
Take $\mu \circ T^{-k}= \mu_k$. Then by Radon-Nikodym theorem, there
exists a non-negative locally integrable function $f_{T^k}$ on
$\Omega$ so that the measure $\mu_k$ can be represented as $$
\mu_k(A) = \int_A f_{T^{k}}(x) d\mu(x),
\,\,\,\mbox{for~~all~~$A\in\Sigma$}$$ where the function $f_{T^k}$
is the Radon-Nikodym derivative of the measure $\mu_k$ with respect
to the measure $\mu$. The following theorem characterizes the
composition operators $C_T$ with ascent $k$ on Orlicz-Sobolev spaces
$W^{1,\varphi}(\Omega)$.
\begin{thm}
The composition operator on Orlicz space $W^{1,\varphi}(\Omega)$ has
ascent $k\geq 1$ if and only if $k$ is the first positive integer
such that the measures $\mu_k$ and $\mu_{k+1}$ are equivalent.
\end{thm}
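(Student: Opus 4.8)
The plan is to reduce the statement, through the kernel description of $C_T$ already obtained, to a comparison of the zero sets of the Radon--Nikodym derivatives $f_{T^{k}}$, and then to recognise that comparison as the equivalence of the measures $\mu_{k}$ and $\mu_{k+1}$. By definition the ascent of $C_{T}$ is the least integer $k\ge 1$ with $\ker C_{T}^{\,k}=\ker C_{T}^{\,k+1}$, and Theorem 3.1 guarantees that once such a $k$ occurs the entire chain $\ker C_{T}\subseteq\ker C_{T}^{\,2}\subseteq\cdots$ becomes stationary from stage $k$ on. Hence it suffices to show that, for each fixed $k\ge 1$,
\[
  \ker C_{T}^{\,k}=\ker C_{T}^{\,k+1}\quad\Longleftrightarrow\quad \mu_{k}\sim\mu_{k+1}.
\]

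First I would identify $\ker C_{T}^{\,k}$. Since $C_{T}$ is bounded on $W^{1,\varphi}(\Omega)$ so is $C_{T}^{\,k}=C_{T^{k}}$, and for any $g\in W^{1,\varphi}(\Omega)$ one has $g=0$ in $W^{1,\varphi}(\Omega)$ iff $\|g\|_{\varphi}=0$, because $\|g\|_{\varphi}=0$ forces $g=0$ $\mu$-a.e.\ and hence all weak derivatives of $g$ vanish as well. Therefore $f\in\ker C_{T}^{\,k}$ iff $\|f\circ T^{k}\|_{\varphi}=0$; and running the computation from the proof of Theorem 2.1 with $T^{k}$ in place of $T$, namely $\int_{\Omega}\varphi(\alpha|f\circ T^{k}|)\,d\mu=\int_{\Omega}\varphi(\alpha|f|)\,f_{T^{k}}\,d\mu$, this holds iff $f_{T^{k}}|_{S_{f}}=0$, i.e.\ iff $\mu_{k}(S_{f})=0$, where $S_{f}=\{x\in\Omega:f(x)\neq 0\}$. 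Thus $\ker C_{T}^{\,k}=\{f\in W^{1,\varphi}(\Omega):\mu_{k}(S_{f})=0\}=W^{1,\varphi}(\Omega_{\circ}^{(k)})$ with $\Omega_{\circ}^{(k)}:=\{x\in\Omega:f_{T^{k}}(x)=0\}$.

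Next I would compare these kernels for consecutive indices. From the chain $\cdots\ll\mu_{k+1}\ll\mu_{k}\ll\cdots\ll\mu$ we get $\mu_{k}(S_{f})=0\Rightarrow\mu_{k+1}(S_{f})=0$, so $\ker C_{T}^{\,k}\subseteq\ker C_{T}^{\,k+1}$ always; equality holds precisely when, conversely, $\mu_{k+1}(S_{f})=0\Rightarrow\mu_{k}(S_{f})=0$ for every $f\in W^{1,\varphi}(\Omega)$. If $\mu_{k}\sim\mu_{k+1}$ the two measures have the same null sets and the implication is trivial. For the converse I would argue as in Section 2: if $\mu_{k}$ fails to be absolutely continuous with respect to $\mu_{k+1}$, pick $A\in\Sigma$ with $\mu_{k+1}(A)=0<\mu_{k}(A)$ (and, by $\sigma$-finiteness, $\mu_{k}(A)<\infty$); since $\mu_{k}\ll\mu$ this set has $\mu(A)>0$, and one produces a nonzero $f\in W^{1,\varphi}(\Omega)$ essentially supported in $A$ with $\mu_{k}(S_{f})>0$, whence $f\in\ker C_{T}^{\,k+1}\setminus\ker C_{T}^{\,k}$. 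Combining the two halves gives the displayed equivalence, and with the first paragraph we conclude that the ascent of $C_{T}$ equals $k$ exactly when $k$ is the first positive integer for which $\mu_{k}$ and $\mu_{k+1}$ are equivalent.

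The step I expect to be the main obstacle is the construction in the last paragraph, equivalently the assertion that the subspace $W^{1,\varphi}(\Omega_{\circ}^{(k)})$ determines the set $\Omega_{\circ}^{(k)}$ up to $\mu$-null sets. Unlike the pure Orlicz case, where characteristic functions of finite-measure sets are at hand, in the Orlicz--Sobolev setting one must exhibit an honest weakly differentiable function living on the prescribed positive-measure set; this is where the Euclidean structure of $\Omega\subseteq\mathbb R^{n}$ and the regularity relating $\mu$ to Lebesgue measure that is implicit in the definition of the weak derivative are genuinely used (e.g.\ by localising a smooth cut-off near a Lebesgue density point of the set where $f_{T^{k}}>0$ and $f_{T^{k+1}}=0$). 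Everything else is the bookkeeping with Radon--Nikodym derivatives already carried out in Theorems 2.1 and 2.2 together with the stabilisation result Theorem 3.1.
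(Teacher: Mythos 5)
Your overall route is the same as the paper's: identify $\ker C_T^{\,k}=\ker C_{T^k}=W^{1,\varphi}(\Omega_k)$ with $\Omega_k=\{x:f_{T^k}(x)=0\}$ by rerunning the kernel computation of Theorem 2.1 with $T^k$ in place of $T$, observe that $\mu_{k+1}\ll\mu_k$ always gives $\ker C_T^{\,k}\subseteq\ker C_T^{\,k+1}$, and reduce the theorem to the stagewise equivalence ``$\ker C_T^{\,k}=\ker C_T^{\,k+1}$ iff $\mu_k\sim\mu_{k+1}$''. Your forward half (equivalent measures have the same null sets, hence $\mu_k(S_f)=0\Leftrightarrow\mu_{k+1}(S_f)=0$) is fine, and is if anything cleaner than the paper's chain-rule manipulation of Radon--Nikodym derivatives, which establishes the same coincidence of the zero sets $\Omega_k=\Omega_{k+1}$.

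The genuine gap is in the converse half, exactly at the step you flag as the main obstacle, and your sketched fix does not close it. To exhibit $f\in\ker C_T^{\,k+1}\setminus\ker C_T^{\,k}$ you need a nonzero $f\in W^{1,\varphi}(\Omega)$ with $S_f$ contained in $\Omega_{k+1}$ up to a $\mu$-null set (so that $\mu_{k+1}(S_f)=0$) and with $\mu\bigl(S_f\cap(\Omega_{k+1}\setminus\Omega_k)\bigr)>0$. A smooth cut-off localized near a Lebesgue density point of $\{f_{T^k}>0,\ f_{T^{k+1}}=0\}$ does not achieve this: its nonvanishing set is essentially a full ball, which in general is not contained mod $\mu$ in $\Omega_{k+1}$, so such an $f$ need not lie in $\ker C_T^{\,k+1}$ at all. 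Moreover, the underlying claim you would need --- that the subspace $W^{1,\varphi}(\Omega_{k+1})$ determines $\Omega_{k+1}$ up to null sets --- cannot be extracted from the kernel description alone and can fail for Sobolev-type spaces: for instance, when $n=1$ and $\Omega_{k+1}\setminus\Omega_k$ is a nowhere dense set of positive measure (a fat Cantor set), every $f\in W^{1,\varphi}$ vanishing a.e.\ off $\Omega_{k+1}$ has a locally absolutely continuous representative vanishing on a dense open set, hence $f=0$; the two kernels then coincide although $\mu_k\not\ll\mu_{k+1}$. So this direction requires either extra hypotheses or a different argument. To be fair, the paper itself passes from $W^{1,\varphi}(\Omega_k)=W^{1,\varphi}(\Omega_{k+1})$ to ``$\Omega_k=\Omega_{k+1}$ almost everywhere'' with no justification, so you have correctly located the real difficulty; but as written your construction leaves that difficulty unresolved.
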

\begin{proof}
Suppose that $\mu_k$ and $\mu_{k+1}$ are equivalent. Then $\mu_{k+1}
\ll\mu_k\ll\mu_{k+1}$. Since $\mu_k\ll\mu_{k+1}\ll\mu$, hence the
chain rule of Radon-Nikodym derivative implies that
\begin{align}
\frac{d\mu_k}{d\mu}(x) & = \frac{d \mu_k}{d \mu_{k+1}}(x) \cdot
\frac{d\mu_{k+1}}{d\mu}(x)\\
\Rightarrow f_{T^k}(x) & =\frac{d \mu_k}{d \mu_{k+1}}(x)\cdot
f_{T^{k+1}}(x) \label{eq:1}
\end{align}
Similarly, $\mu_{k+1}\ll\mu_k\ll\mu$ implies that
\begin{eqnarray}
f_{T^{k+1}}(x)= \frac{d\mu_{k+1}}{d\mu_{k}}(x)\cdot
f_{T^{k}}(x)\label{eq:2}
\end{eqnarray}
Now, the kernel of $C^k_T$ given by $ker\,(C^k_T) = ker\,(C_{T^k})
=W^{1,\varphi}(\Omega_k)$, where $\Omega_k = \{x\in \Omega:
f_{T^k}(x)=0\}$. Similarly,  $ker\,(C^{k+1}_T) =
W^{1,\varphi}(\Omega_{k+1})$, where $\Omega_{k+1} = \{x\in \Omega:
f_{T^{k+1}}(x)=0\}$. From $\ref{eq:1}$ and $\ref{eq:2}$, it follows
that $\Omega_k= \Omega_{k+1}$. Therefore we have,
$$ ker\,(C^k_T)= W^{1,\varphi}(\Omega_k)= W^{1,\varphi}(\Omega_{k+1})=
ker\,(C^{k+1}_T).$$ Since $k$ is the least hence, the ascent of $C_T$ is $k$.\\
Conversely, suppose that ascent of $C_T$ is $k$. Now this implies
that if $ker\,(C^k_T)= W^{1,\varphi}(\Omega_k)$ and
$ker\,(C^{k+1}_T)=W^{1,\varphi}(\Omega_{k+1})$, then
$W^{1,\varphi}(\Omega_k)=W^{1,\varphi}(\Omega_{k+1})$. Hence
$\Omega_k= \Omega_{k+1}$ almost everywhere with respect to the
measure $\mu$. So $\Omega_k = \{x\in \Omega: f_{T^k}(x)=0\}=\{x\in
\Omega: f_{T^{k+1}}(x)=0\}$. It is known that $\mu_{k+1}\ll\mu_{k}$.
Thus only need to show $\mu_k \ll \mu_{k+1}$. For this let $E\in
\Sigma $ such
that $\mu_{k+1}(E)=0$. Now we have the following cases:\\
Case-$1$: When  $E\cap \Omega_k =\emptyset$.\\
Then $0= \mu_{k+1}(E)= \int_E f_{T^{k+1}}(x)d\mu(x)$ implies that
$\mu(E)=0$ as on $E$, $f_{T^{k+1}}(x)>0$. As $\mu_k(E)= \int_E
f_{T^k}(x) d\mu(x) $ and $\mu(E)=0$, hence $\mu_k(E)=0$.\\
Case-$2$: when $E\cap \Omega_k\neq \emptyset$.\\
Then we have,
\begin{align*}
0 =\mu_{k+1}(E) &  = \int_E
f_{T^{k+1}}(x)d\mu(x)\\
& = \int_{E\setminus (E\cap \Omega_k)}f_{T^{k+1}}(x)d\mu(x) +
\int_{E\cap \Omega_k}f_{T^{k+1}}(x)d\mu(x)\\
& = \int_{E\setminus (E\cap \Omega_k)}f_{T^{k+1}}(x)d\mu(x)
\end{align*}
Now this implies that $\mu(E\setminus (E\cap \Omega_k))=0$.
Therefore, in either cases $\mu_{k+1}(E)=0$ implies that
$\mu_k(E)=0$. Thus $\mu_{k+1}\ll\mu_k\ll\mu_{k+1}$.
\end{proof}
\begin{cor}
Ascent of the composition operator $C_\tau$ on Orlicz-Sobolev spaces
is infinite if and only if there does not exist any positive integer
$k$ such that the measures $\mu_k$ and $\mu_{k+1}$ are equivalent.
\end{cor}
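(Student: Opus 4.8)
The plan is to obtain this as an immediate consequence of Theorem 3.2 via a purely logical negation argument, using Theorem 3.1 only implicitly to justify stabilization of the kernel chain. Recall that, by definition, the ascent of $C_T$ is infinite precisely when there is no $k\in\mathbb{N}$ with $\ker(C_T^k)=\ker(C_T^{k+1})$, and that $\ker(C_T^k)=W^{1,\varphi}(\Omega_k)$ with $\Omega_k=\{x\in\Omega: f_{T^k}(x)=0\}$ as established in the proof of Theorem 3.2.

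First I would prove the ``if'' direction by contraposition. Assume the ascent of $C_T$ is finite, say equal to some $k\geq 1$. Then Theorem 3.2 applies directly and tells us, in particular, that $k$ is a positive integer for which the measures $\mu_k$ and $\mu_{k+1}$ are equivalent. Hence, if the ascent of $C_T$ is finite, such an integer $k$ exists; equivalently, if there is no positive integer $k$ with $\mu_k$ and $\mu_{k+1}$ equivalent, the ascent of $C_T$ must be infinite.

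For the converse, I would argue by contradiction. Suppose the ascent of $C_T$ is infinite, yet there does exist some positive integer $j$ with $\mu_j$ and $\mu_{j+1}$ equivalent. By the well-ordering of $\mathbb{N}$, let $k$ be the least such integer. Then $k$ is the first positive integer for which $\mu_k$ and $\mu_{k+1}$ are equivalent, so Theorem 3.2 gives that $C_T$ has ascent $k$, which is finite --- contradicting the hypothesis. Therefore no positive integer $k$ with $\mu_k$ and $\mu_{k+1}$ equivalent can exist, which is the desired conclusion.

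There is no real obstacle here, since the corollary is essentially the contrapositive of Theorem 3.2; the only points requiring a word of care are that ``the least such $k$'' is legitimate (immediate from well-ordering) and that the monotone chain $\cdots\ll\mu_{k+1}\ll\mu_k\ll\cdots\ll\mu_1\ll\mu$ together with Theorem 3.1 already guarantees, inside the statement of Theorem 3.2, that once the kernels agree at level $k$ they agree at all higher levels, so nothing about that needs to be re-derived. Alternatively, one may phrase the whole argument without contradiction by observing that the two conditions ``$\exists\,k:\ \ker(C_T^k)=\ker(C_T^{k+1})$'' and ``$\exists\,k:\ \mu_k$ and $\mu_{k+1}$ are equivalent'' are equivalent --- applying Theorem 3.2 to the minimal witness of either condition produces a witness of the other --- and then negating both sides.
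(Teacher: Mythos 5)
Your argument is correct and matches the paper's intent: the paper states this corollary without a separate proof, treating it exactly as you do --- the logical negation (contrapositive) of Theorem 3.2, with the minimal-witness observation handling the ``first such $k$'' phrasing.
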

\par We say that a measurable transformation $T$ is measure preserving if $\mu(T^{-1}(E)) =
\mu(E)$ for all $E\in \Sigma$. We also have the following results:
\begin{cor}
\begin{enumerate}
\item  If the measure $\mu$ is measure preserving then the ascent of
the composition operator $C_T$ on Orlicz-Sobolev space
$W^{1,\varphi}(\Omega)$ is $1$.
\item If $T$ is a nonsingular surjective measurable transformation such that
$\mu(\tau^{-1}(E))\geq \mu(E)$ for all $E\in \Sigma$, then also the
ascent of the composition operator induced by $T$ on Orlicz-Sobolev
space is $1$.
\item If $T$ is essentially surjective, then also ascent of $C_T$ is
equal to $1$.
\end{enumerate}
\end{cor}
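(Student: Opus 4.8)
The plan is to derive all three assertions from Theorem 3.2, supplemented in part (3) by the injectivity criterion of Theorem 2.2. Recall from the chain $\cdots\ll\mu\circ T^{-(k+1)}\ll\mu\circ T^{-k}\ll\cdots\ll\mu\circ T^{-1}\ll\mu$ that one always has $\mu_2=\mu\circ T^{-2}\ll\mu\circ T^{-1}=\mu_1$. Hence, by Theorem 3.2 taken with $k=1$ (where the requirement that ``$1$ be the first positive integer'' is automatic), showing that the composition operator $C_T$ has ascent $1$ is the same as showing $\mu_1\ll\mu_2$, i.e.\ $\mu_1\equiv\mu_2$.

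For part (1) I read the hypothesis as ``$T$ is measure preserving'', that is $\mu(T^{-1}(E))=\mu(E)$ for all $E\in\Sigma$ (in particular $T$ is then nonsingular, so $C_T$ is well defined). Then for every $E\in\Sigma$,
\[
\mu_2(E)=\mu\bigl(T^{-2}(E)\bigr)=\mu\bigl(T^{-1}(T^{-1}(E))\bigr)=\mu\bigl(T^{-1}(E)\bigr)=\mu(E)=\mu_1(E),
\]
so in fact $\mu_1=\mu_2$; a fortiori $\mu_1\equiv\mu_2$, and Theorem 3.2 gives ascent $1$. (Here $f_T=d\mu_1/d\mu\equiv 1$ a.e., so $\Omega_\circ$ is $\mu$-null and $C_T$ is a genuinely nonzero composition operator, so its ascent is indeed the positive integer $1$.)

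For part (2), let $E\in\Sigma$ with $\mu_2(E)=0$ and put $A=T^{-1}(E)\in\Sigma$. Then $0=\mu_2(E)=\mu\bigl(T^{-1}(A)\bigr)$, and applying the hypothesis $\mu(T^{-1}(\cdot))\ge\mu(\cdot)$ to the set $A$ forces $\mu(A)=0$, i.e.\ $\mu_1(E)=\mu(T^{-1}(E))=\mu(A)=0$. Thus $\mu_1\ll\mu_2$, hence $\mu_1\equiv\mu_2$, and Theorem 3.2 yields ascent $1$; here nonsingularity and surjectivity of $T$ are used only to guarantee that $C_T$ is a well-defined nonzero composition operator.

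For part (3) I invoke Theorem 2.2: since $T$ is essentially surjective, $C_T$ is injective, so $\ker C_T=\{0\}$. Injectivity propagates to powers — if $C_T^{2}f=0$ then $C_Tf\in\ker C_T=\{0\}$, whence $f\in\ker C_T=\{0\}$ — so $\ker C_T=\ker C_T^{2}=\{0\}$ and the ascent of $C_T$ equals $1$. None of these steps is expected to present a genuine obstacle; the only points needing care are bookkeeping ones: reading hypothesis (1) as a statement about $T$ rather than about $\mu$, applying the inequality in (2) to $A=T^{-1}(E)$ rather than to $E$ itself, and recalling that by convention the ascent is a positive integer, so the conclusion in each case is ``$=1$'' and not ``$0$''.
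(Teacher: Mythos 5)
Your proof is correct. The paper states this corollary without giving a proof, and your derivation -- parts (1) and (2) from Theorem 3.2 with $k=1$ (where, since $\mu_2\ll\mu_1$ always holds by nonsingularity, only $\mu_1\ll\mu_2$ needs to be checked, which you do correctly, in (2) by applying the hypothesis to $A=T^{-1}(E)$), and part (3) from the injectivity criterion of Theorem 2.2 -- is exactly the intended route, including the sensible reading of the misstated hypothesis in (1) as ``$T$ is measure preserving''.
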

\section*{\small Conclusions:} We have proposed and proved a necessary
and sufficient condition for the injectivity of composition operator
$C_T$. We have also characterized the operator $C_T$ defined on
Orlicz-Sobolev space with finite and infinite ascent. Our future
plan of work will be to apply these results to a class of non-linear
PDEs.
\section*{\small Acknowledgement:}
One of the author R.K. Giri acknowledge the Ministry of Human
Resource Development (M.H.R.D.), India for the financial assistantship.

\end{document}